
\documentclass[12pt]{article}

\usepackage{amssymb,amsthm,amsmath}

\usepackage{url}






\theoremstyle{plain}

\newtheorem{theorem}{Theorem}[section]
\newtheorem{corollary}[theorem]{Corollary}

\newtheorem{proposition}[theorem]{Proposition}
\newtheorem{prob}[theorem]{Problem}

\theoremstyle{definition}

\newtheorem{remark}[theorem]{Remark}

\newcommand{\R}{{\mathbb R}}

\newcommand{\eps}{\varepsilon}

\DeclareMathOperator{\card}{card}

\newcommand{\PP}{\mathbb{P}} 
\newcommand{\EE}{\mathbb{E}} 

\begin{document}  

\title{Khinchine inequality for slightly dependent random variables}

\author{ Susanna Spektor}
\date{}

\newcommand\address{\noindent\leavevmode

\noindent
Susanna Spektor, \\
Dept.~of Math.~and Stat.~Sciences,\\
University of Alberta, \\
Edmonton, Alberta, Canada, T6G 2G1.\\
E-mails: \texttt{\small sanaspek@gmail.com}

}


\maketitle

\begin{abstract}
We prove a Khintchine type inequality under the  assumption that the sum of
Rademacher random variables equals zero. As an application we show a new
tail-bound for a hypergeometric random variable.

\medskip

\noindent 2010 Classification: 46B06, 60E15
%

\noindent Keywords:
Khintchine inequality, Kahane inequality, Rademacher random variables,
Hypergeometric distribution.
\end{abstract}




\setcounter{page}{1}

\section{Introduction}

The Khinchine inequality plays a crucial role in many deep results of Probability and Analysis
(see \cite{Garlin, Kah, LT, MS, PShir, NTJ} among others). It says that  $L_p$ and $L_2$ norms  of sums of weighted
independent Rademacher random variables are comparible.
More precisely, we say that $\eps _0$ is a Rademacher random variable if
$\PP(\varepsilon_0=1)=\PP(\varepsilon_0=-1)=\displaystyle{\tfrac 12}$.
Let $\varepsilon_i$, $i\leq N$,  be independent
copies of $\varepsilon_0$ and $a \in \R^{N}$.
The Khinchine inequality
(see e.g. Theorem 2.b.3 in \cite{LT}  or Theorem 12.3.1 in \cite{Garlin})
states that for any $p\geq 2$ one has
\begin{align}\label{1}
 \left(\EE\left|\sum_{i=1}^{N}a_i\varepsilon_i\right|^p\right)^{\frac 1p}\leq
 \sqrt p \, \|a\|_{2}=\sqrt p \left(\EE\left|\sum_{i=1}^{N}a_i\varepsilon_i\right|^2\right)^{\frac 12}.
\end{align}

Note that the (Rademacher) random vector $\varepsilon = (\varepsilon_1, \ldots, \varepsilon_{N})$
in the Khinchine inequality has independent coordinates. However in many problems of Analysis and
Probability it is important to consider random vectors with dependent coordinates, e.g.  so-called
log-concave random vectors, which in general have dependent coordinates, but
 whose behaviour is similar to that of Rademacher random vector or to the Gaussian random vector
(see e.g. \cite{G} and references there in). In \cite{O'Rurke} the author considered random
matrices, whose rows are independent random vectors
satisfying certain conditions (so the vectors may have dependent coordinates).
He studied limiting empirical distribution of eigenvalues of
such matrices. As an example of such a vector, showing that the conditions cover large
class of natural distributions, not covered by previously known results,
O'Rourke considered the vector
$\varepsilon = (\varepsilon_1, \ldots, \varepsilon_{N})$,
whose coordinates are Rademacher random variables under the additional condition
\begin{align}\label{2}
  S=\sum_{i=1}^{N}\varepsilon_i=0
\end{align}
(see Examples~1.4 and 1.10 in \cite{O'Rurke}).
For such vectors he proved a Khintchine type inequality with the factor $C\sqrt{N} p /\log N$
in front of $\|a\|_2$, which was enough for his purposes. The goal of this paper is to show that
such random variables satisfy a Khintchine type inequality with the same factor $\sqrt{p}$
as in the standard Khintchine  inequality.  To shorten notation, by $\EE_S$ we  denote an
expectation with assumption (\ref{2}). Note that the corresponding probability space is
\begin{align}\label{set}
 \Omega=\left\{\varepsilon \in\{-1, 1\}^{N}\, |\,
 \sum_{i=1}^{N}\varepsilon_i=0\right\}=\left\{\varepsilon\in\{-1, 1\}^{N}\, |\,
  \card\{i: \, \varepsilon_i=1\}=n\right\}.
\end{align}

Our main result is the following theorem.

\begin{theorem}\label{mainthm}
Let $\varepsilon_i, i\leq N$,  be Rademacher random variables satisfying condition (\ref{2}).
Let $a =(a_1, \ldots, a_N) \in \R^{N}$ and $\displaystyle{b=\tfrac{1}{N}\sum_{i=1}^{N}a_i}$. Then
\begin{equation}\label{main}
 \left(\EE_S\left|\sum_{i=1}^{N}a_i\varepsilon_i\right|^p\right)^{1/p} \leq
  \sqrt{ 2p } \, \left(\|a\|_{2}^2- N\, b^2 \right)^{1/2}
  \leq
 \sqrt{ 2p } \, \left(\EE_S\left|\sum_{i=1}^{N}a_i\varepsilon_i\right|^2\right)^{1/2}.
\end{equation}
\end{theorem}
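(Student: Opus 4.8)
The plan is to reduce the constrained inequality to the classical Khinchine inequality (\ref{1}) by building a random–permutation model of the space $\Omega$ and symmetrizing. First I would record two elementary facts. Since $\sum_i\varepsilon_i=0$ on $\Omega$, expanding $0=\EE_S(\sum_i\varepsilon_i)^2$ and using the obvious symmetry of $\EE_S$ gives $\EE_S[\varepsilon_i\varepsilon_j]=-1/(N-1)$ for $i\neq j$; hence $\EE_S|\sum_i a_i\varepsilon_i|^2=\frac{N}{N-1}(\|a\|_2^2-Nb^2)\geq\|a\|_2^2-Nb^2$, which is precisely the second inequality in (\ref{main}). Next I would center the coefficients: set $c_i=a_i-b$, so that $\sum_i c_i=0$ and, because $\sum_i\varepsilon_i=0$ on $\Omega$, one has $\sum_i a_i\varepsilon_i=\sum_i c_i\varepsilon_i$ pointwise on $\Omega$, while $\|c\|_2^2=\|a\|_2^2-Nb^2$. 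Thus it suffices to prove $(\EE_S|\sum_i c_i\varepsilon_i|^p)^{1/p}\le\sqrt{2p}\,\|c\|_2$, and the centering is what upgrades the crude constant $\|a\|_2$ to the sharp $\|c\|_2=(\|a\|_2^2-Nb^2)^{1/2}$.

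Then I would model $\Omega$ by permutations. With $n=N/2$, a uniform $\varepsilon\in\Omega$ arises from a uniform permutation $\pi$ of $\{1,\dots,N\}$ by declaring $\varepsilon_j=1$ exactly when $j\in\{\pi(1),\dots,\pi(n)\}$. Under this coupling the target variable becomes a sum over $n$ random ``pairs'',
\begin{equation*}
X:=\sum_{i=1}^N c_i\varepsilon_i=\sum_{i=1}^{n}\bigl(c_{\pi(i)}-c_{\pi(i+n)}\bigr),
\end{equation*}
where the pair attached to index $i$ is $\{\pi(i),\pi(i+n)\}$.

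The heart of the argument is a symmetrization that inserts independent signs at no cost. For $s\in\{-1,1\}^n$ let $\tau_s$ be the involution of $\{1,\dots,N\}$ that swaps $i$ with $i+n$ whenever $s_i=-1$; right multiplication $\pi\mapsto\pi\circ\tau_s$ preserves the uniform law on permutations and turns the $i$-th pair difference into $s_i\bigl(c_{\pi(i)}-c_{\pi(i+n)}\bigr)$. Consequently, for every fixed $s$ the variable $X$ has the same law as $\sum_i s_i\bigl(c_{\pi(i)}-c_{\pi(i+n)}\bigr)$, and averaging over independent Rademacher signs $\delta=(\delta_1,\dots,\delta_n)$ (independent of $\pi$) yields $\EE_S|X|^p=\EE_\pi\EE_\delta\bigl|\sum_{i=1}^n\delta_i d_i\bigr|^p$ with $d_i=c_{\pi(i)}-c_{\pi(i+n)}$. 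Applying the classical Khinchine inequality (\ref{1}) conditionally on $\pi$ bounds the inner expectation by $p^{p/2}(\sum_i d_i^2)^{p/2}$, and the deterministic estimate $\sum_{i=1}^n d_i^2=\sum_{i=1}^n(c_{\pi(i)}-c_{\pi(i+n)})^2\le 2\sum_{j=1}^N c_j^2=2\|c\|_2^2$ then gives $\EE_S|X|^p\le(2p)^{p/2}\|c\|_2^p$, i.e.\ the constant $\sqrt{2p}$, the factor $2$ originating exactly from $(x-y)^2\le 2(x^2+y^2)$.

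The step I expect to demand the most care is the symmetrization: verifying that the partial swaps $\tau_s$ act measure-preservingly and that the independent signs $\delta_i$ may therefore be introduced without changing the distribution of $X$. This is where the dependence imposed by (\ref{2}) is genuinely used, whereas the variance computation and the final convexity bound are routine.
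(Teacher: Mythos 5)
Your proposal is correct and takes essentially the same route as the paper's proof: after centering (the paper's ``WLOG $\sum_i a_i=0$'' step) it passes to the permutation model, pairs the indices $\pi(i)$ and $\pi(i+n)$, inserts independent Rademacher signs via the measure-preserving swaps of paired indices, and then applies the classical Khinchine inequality together with $(x-y)^2\le 2(x^2+y^2)$, exactly as in Section~2. The only cosmetic differences are that you perform the symmetrization in one step through the group action $\pi\mapsto\pi\circ\tau_s$ rather than coordinate-by-coordinate, and you verify the second inequality in (\ref{main}) via the covariance $\EE_S[\varepsilon_i\varepsilon_j]=-1/(N-1)$ rather than the paper's equivalent direct computation of $\EE_{\Pi}|f|^2$.
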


The first step in the proof is a reformulation in terms of random variables on the
permutation group as follows. Let $N=2n$. For the set $\Omega$ defined in (\ref{set}),
we put into correspondence the group $\Pi_{N}$ of all permutations of the set $\{1,..., N\}$ as
\begin{align*}
   \sigma \in \Pi_{N}\longleftrightarrow A_{\sigma}=\left\{\varepsilon \in \Omega \,\,
    | \,\, \varepsilon_i=1 \,\, \mbox{if} \,\,\sigma(i)\leq n; \, \varepsilon_i=-1
   \,\,\mbox{if}\,\,\sigma(i)>n\right\}.
\end{align*}
Given $a \in \R^{N}$, define $f_a :\Pi_{N}\longrightarrow\R$ by
\begin{align}\label{function}
    f_a(\sigma) :=\left|\sum_{i=1}^na_{\sigma(i)}-\sum_{i=n+1}^{2n}a_{\sigma(i)}\right|.
\end{align}
By $\EE_{\Pi}$ we denote the average over $\Pi_{N}$, i.e. the expectation with respect
to the normalized counting measure on $\Pi_{N}$. Note, that $\EE_S\left|\sum_{i=1}^{N}
a_i \varepsilon_i\right|^p=\EE_{\Pi}|f|^p$. Therefore Theorem~\ref{mainthm} is equivalent to
the following theorem.

\begin{theorem}\label{proppsi2}
Let $N=2n$, $a \in \R^{N}$. Let $f_a$ be the function defined in (\ref{function}).
Let $\displaystyle{b=\tfrac{1}{N}\sum_{i=1}^{N}a_i}$. Then, for $p\geq 2$
\begin{align}\label{n9}
 \left(\EE_{\Pi}|f_a|^p\right)^{1/p}\leq \sqrt{2p}\left(\sum_{i=1}^{N}a_i^2- N \,
  b^2\right)^{1/2}\leq \sqrt{2p} \left(\EE_{\Pi}|f_a|^2\right)^{1/2}.
\end{align}
\end{theorem}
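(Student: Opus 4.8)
The plan is to prove the equivalent statement (Theorem~\ref{proppsi2}) by coupling the permutation average with an average of \emph{independent} Rademacher sums, so that the classical Khinchine inequality~\eqref{1} can be applied conditionally. First I would normalize: since each of the two sums in~\eqref{function} contains exactly $n$ terms, replacing $a_i$ by $a_i-b$ leaves $f_a$ unchanged while turning $\sum_i a_i^2-Nb^2$ into $\sum_i(a_i-b)^2$ and sending the new mean to $0$. Thus the whole chain~\eqref{n9} for $a$ is equivalent to the same chain for the centered vector, and I may assume $b=0$, i.e.\ $\sum_i a_i=0$; the target first inequality becomes $(\EE_{\Pi}|f_a|^p)^{1/p}\le\sqrt{2p}\,\|a\|_2$. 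Writing $g(\sigma)=\sum_{i=1}^n a_{\sigma(i)}-\sum_{i=n+1}^{2n}a_{\sigma(i)}$, note $f_a=|g|$ and $g(\sigma)=\sum_{j\in T}a_j-\sum_{j\notin T}a_j$ where $T=\sigma(\{1,\dots,n\})$ is a uniformly random $n$-subset of $\{1,\dots,N\}$.

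The coupling is the heart of the argument. I would generate the uniform distribution on such subsets $T$ (equivalently, on $\Omega$ as in~\eqref{set}) as follows: choose a uniformly random perfect matching $\{i_k,j_k\}_{k=1}^n$ of $\{1,\dots,N\}$ together with independent Rademacher variables $\eta_1,\dots,\eta_n$, and place $i_k$ into $T$ and $j_k$ into its complement when $\eta_k=1$, reversing the roles when $\eta_k=-1$. A short count (each balanced sign pattern arises from exactly $n!$ pairs (matching, signs)) confirms this reproduces the uniform measure on $\Omega$. The payoff is that pair $k$ contributes $\eta_k(a_{i_k}-a_{j_k})$ to $g$, so \emph{conditionally on the matching $M$}, one has the identity
\[
 g=\sum_{k=1}^{n}\eta_k\,c_k,\qquad c_k=a_{i_k}-a_{j_k},
\]
which is an ordinary weighted Rademacher sum in the $\eta_k$.

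Now I would apply~\eqref{1} conditionally and then average over the matching. For each fixed $M$, inequality~\eqref{1} gives $\EE_\eta|g|^p\le p^{p/2}\big(\sum_k c_k^2\big)^{p/2}=p^{p/2}Q_M^{p/2}$ with $Q_M=\sum_{k=1}^n(a_{i_k}-a_{j_k})^2$, whence $\EE_{\Pi}|f_a|^p=\EE_M\EE_\eta|g|^p\le p^{p/2}\,\EE_M Q_M^{p/2}$. The decisive observation is that a perfect matching uses every index exactly once, so $Q_M\le\sum_{k}2(a_{i_k}^2+a_{j_k}^2)=2\|a\|_2^2$ \emph{deterministically}; hence $\EE_M Q_M^{p/2}\le(2\|a\|_2^2)^{p/2}$ and $(\EE_{\Pi}|f_a|^p)^{1/p}\le\sqrt{2p}\,\|a\|_2$, which is the first inequality of~\eqref{n9}. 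The second inequality is then just the elementary variance computation $\EE_{\Pi}|f_a|^2=\tfrac{N}{N-1}\|a\|_2^2\ge\|a\|_2^2$.

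The main obstacle to watch is precisely the control of $Q_M^{p/2}$: one cannot simply replace $Q_M$ by its mean $\tfrac{N}{N-1}\|a\|_2^2$, since Jensen's inequality bounds $\EE_M Q_M^{p/2}$ from \emph{below}, not above, by $(\EE_M Q_M)^{p/2}$. What rescues the estimate, and produces exactly the factor $\sqrt{2}$ multiplying $\sqrt{p}$, is the deterministic ceiling $Q_M\le 2\|a\|_2^2$ available for every matching. The only other point requiring care is verifying that the matching-plus-signs construction yields the uniform law on $\Omega$, which is routine but should be stated.
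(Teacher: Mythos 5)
Your proof is correct and takes essentially the same route as the paper: your random-matching-plus-signs coupling is exactly the paper's pairing of positions $k$ and $n+k$ under the random permutation (where the signs enter via the measure-preserving swaps $\sigma(k)\leftrightarrow\sigma(n+k)$, i.e.\ iterated symmetrization), and both arguments then apply the classical Khinchine inequality conditionally and close with the same deterministic bound $\sum_k (a_{i_k}-a_{j_k})^2 \le 2\|a\|_2^2$. As a side remark, your variance computation $\EE_{\Pi}|f_a|^2=\tfrac{N}{N-1}\|a\|_2^2$ (for $b=0$) is the correct one — the paper's displayed formula has an extraneous factor of $N$ in the denominator, a typo that would actually falsify the second inequality of \eqref{n9} if taken literally — and your explicit centering step justifies the ``without loss of generality'' reduction more cleanly than the paper does.
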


In Section~\ref{smain} we prove Theorem~\ref{proppsi2}.
Then,  in Section~\ref{hyp},
we consider  a special case of our problem, when the coordinates of the vector $a$ are either ones or zeros.
This particular case leads to the hypergeometric distribution. We obtain a new bounds for the
$p$-th central moments of such variables.

Finally let us note the setting of Theorem~\ref{proppsi2} can be extended to more general case.
We pose the following problem.

\begin{prob}\label{probl}
 Let $a, b \in \R^N$. Estimate $M(a,b,p)$ as a function of $a,b,p,N, M(a,b,2)$. For example, is it true that
$$
   M(a,b,p)\leq M(a,b,2)+C(a,b,p,N),
$$
or that
$$
M(a,b,p)\leq C(p,N)M(a,b,2)?
$$
\end{prob}
\noindent We discuss this problem in the last Section.



\section{Proof of Theorem~\ref{proppsi2}}
\label{smain}



Direct calculations show that
$$
 \EE_{\Pi}|f|^2=\frac{N \|a\|^2_2-\left(\sum_{i=1}^{N}a_i\right)^2}{N(N-1)}.
$$
Thus, without loss of generality we may assume that $\displaystyle{\sum_{i=1}^{N}a_i=0}$.

For $k\leq n$ denote  $\displaystyle{b_{k, \sigma}:=a_{\sigma(k)}-a_{\sigma(n+k)}}$ and by $\displaystyle{H_{k, \sigma}:=\sum_{i=k+1}^na_{\sigma(i)}-\sum_{i=n+k+1}^{2n}a_{\sigma(i)}}$ (with $H_{n, \sigma}=0$). Clearly,
\begin{align*}
\sum_{i=1}^na_{\sigma(i)}-\sum_{i=n+1}^{2n}a_{\sigma(i)}&=b_{1, \sigma}+H_{1, \sigma}=b_{1, \sigma}+b_{2, \sigma}+H_{2, \sigma}=\ldots=\sum_{i=1}^nb_{i, \sigma}.
\end{align*}
Note, that
$\displaystyle{\EE_{\Pi}\left|b_{1, \sigma}+H_{1, \sigma}\right|^p=\EE_{\Pi}\left|-b_{1, \sigma}+H_{1,
\sigma}\right|^p}$. Hence,
$$
\EE_{\Pi}|f_a(\sigma)|^p=\EE_{\Pi}\left|\sum_{i=1}^na_{\sigma(i)}-\sum_{i=n+1}^{2n}a_{\sigma(i)}\right|^p
=\frac{\EE_{\Pi}\left|b_{1, \sigma}+H_{1, \sigma}\right|^p+\EE_{\Pi}\left|-b_{1,
\sigma}+H_{1, \sigma}\right|^p}{2}.
$$
Thus, denoting by $\delta_i, i\leq n$, i.i.d. Rademacher random variables independent of $\varepsilon_1, \ldots, \varepsilon_N,$ and using Khinchine inequality (\ref{1}), we obtain
\begin{align*}
\EE_{\Pi}|f_a(\sigma)|^p&=\EE_{\Pi}\EE_{\delta_1}\left|\delta_1\,b_{1, \sigma}+H_{1, \sigma}\right|^p\\
&=\EE_{\Pi}\EE_{\delta_1}\EE_{\delta_2}\left|\delta_1\,b_{1, \sigma}+\delta_2\,b_{2, \sigma}+H_{2, \sigma}\right|^p
=\ldots=\EE_{\Pi}\EE_{\delta_1}\EE_{\delta_2}\ldots\EE_{\delta_n}\left|\sum_{i=1}^n\delta_i\,b_{i, \sigma}\right|^p\\
&\leq \EE_{\Pi}\left[\sqrt p \left(\sum_{i=1}^nb_{i, \sigma}^2\right)^{1/2}\right]^p
= p^{p/2}\, \EE_{\Pi}\left(\sum_{i=1}^n\left|a_{\sigma(i)}-a_{\sigma(i+n)}\right|^2\right)^{p/2}\\
&\leq p^{p/2}\, \EE_{\Pi}\left(2\sum_{i=1}^n\left(a_{\sigma(i)}^2+a_{\sigma(i+n)}^2\right)\right)^{p/2}\leq
(2p)^{p/2}\, \|a\|_2^p,
\end{align*}
which completes the proof.
\qed


\bigskip

\section{Hypergeometric distribution}\label{hyp}

In this section we discuss a specific case of hypergeometric distribution  and show how it is related to our problem.
Recall that hypergeometric random variable with parameters $(N, n, \ell)$ is a random variable $\xi$ which takes values  $k=0, \ldots, \ell$ with probability $$
\displaystyle{p_k=\frac{{\ell \choose k}{N-\ell \choose n-k}}{{N \choose n}}}.
$$
In this section we consider only the case $N=2n, \, \ell \leq n$. It is well known that $\displaystyle{\EE \, \xi=\ell/2}$.
In the next proposition we estimate the central moment of $\xi$.

\begin{proposition}\label{hypergeometric}
Let  $1\leq \ell \leq n$. Let $\xi$ be $(2n, n, \ell)$ hypergeometric random variable. Then for  $p \geq 2$ one has
\begin{align*}
\EE \, |\xi-\EE\, \xi|^p\leq \sqrt 2 \left(\frac{p \, \ell}{4}\right)^{\frac p2}.
\end{align*}
\end{proposition}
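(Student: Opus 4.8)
The plan is to recognize the $0/1$ case as a direct specialization of Theorem~\ref{proppsi2}, and then to re-run the computation of Section~\ref{smain} while exploiting the extra structure of a $0/1$ vector to sharpen the one step that produced the factor $2$ there.

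First I would take $a=(a_1,\dots,a_{2n})\in\R^{2n}$ with exactly $\ell$ coordinates equal to $1$ and the remaining $2n-\ell$ equal to $0$. Under the uniform measure on $\Pi_{N}$, the quantity $\xi:=\sum_{i=1}^n a_{\sigma(i)}$ counts the $1$'s that land in the random $n$-element set $\{\sigma(1),\dots,\sigma(n)\}$, so $\xi$ has exactly the $(2n,n,\ell)$ hypergeometric law and $\EE_{\Pi}\,\xi=\ell/2=\EE\,\xi$. Since $\sum_{i=n+1}^{2n}a_{\sigma(i)}=\ell-\xi$, the function in \eqref{function} becomes $f_a(\sigma)=|2\xi-\ell|=2|\xi-\EE\,\xi|$, and therefore $\EE_{\Pi}|f_a|^p=2^{p}\,\EE\,|\xi-\EE\,\xi|^p$. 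Thus it suffices to bound $\EE_{\Pi}|f_a|^p$, which is precisely the object controlled in Section~\ref{smain}.

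For this I would repeat the symmetrization-and-Khinchine chain from the proof of Theorem~\ref{proppsi2} verbatim (the telescoping identity and the swap $b_{1,\sigma}\mapsto-b_{1,\sigma}$ need no centering of $a$), stopping at the line $\EE_{\Pi}|f_a|^p\le p^{p/2}\,\EE_{\Pi}\bigl(\sum_{i=1}^n|a_{\sigma(i)}-a_{\sigma(i+n)}|^2\bigr)^{p/2}$. The new input is that for a $0/1$ vector each summand $|a_{\sigma(i)}-a_{\sigma(i+n)}|^2$ is the indicator that the matched pair $\{\sigma(i),\sigma(i+n)\}$ contains exactly one $1$, so $\sum_{i=1}^n|a_{\sigma(i)}-a_{\sigma(i+n)}|^2$ equals the number of such ``mixed'' pairs, which is at most the total number of $1$'s, namely $\ell$. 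This replaces the generic bound $\sum_i|a_{\sigma(i)}-a_{\sigma(i+n)}|^2\le 2\|a\|_2^2$ used in Section~\ref{smain} and removes the lossy factor $2$. Consequently $\EE_{\Pi}|f_a|^p\le (p\ell)^{p/2}$, and dividing by $2^p$ gives $\EE\,|\xi-\EE\,\xi|^p\le (p\ell)^{p/2}/2^{p}=(p\ell/4)^{p/2}\le\sqrt2\,(p\ell/4)^{p/2}$, which is the assertion (in fact with room to spare).

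The only point requiring genuine care is the translation in the second paragraph: one must verify that $\xi$ really has the claimed hypergeometric parameters and that the identity $f_a=2|\xi-\EE\,\xi|$ holds exactly, since the whole reduction hinges on it. The probabilistic inequality itself is then essentially immediate from the machinery already in place, so I expect no serious analytic obstacle; the work is entirely in setting up the dictionary between the $0/1$ vector, the perfect matching $\{\sigma(i),\sigma(i+n)\}_{i\le n}$ induced by $\sigma$, and the hypergeometric counting variable $\xi$.
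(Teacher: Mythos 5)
Your proof is correct, and it takes a genuinely different route from the paper's. The paper proves Proposition~\ref{hypergeometric} in two ways: the main proof applies Theorem~\ref{proppsi2} as a black box (through Corollary~\ref{corhyp}, i.e.\ $\EE_S\left|\sum_i a_i\varepsilon_i\right|^p\le(2p\ell)^{p/2}$ for the $0/1$ vector) and then translates via the identity $X=2(\xi-\EE\,\xi)$; the alternative ``direct'' proof compares the hypergeometric weights with binomial ones, using $\binom{2n-\ell}{n-k}\big/\binom{2n}{n}\le 2\cdot 2^{-\ell}$, and then applies the classical Khinchine inequality (\ref{1}), which yields the constant $2$ in place of $\sqrt 2$. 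You instead re-open the proof of Theorem~\ref{proppsi2} and sharpen its single lossy step: for a $0/1$ vector, $\sum_{i=1}^n|a_{\sigma(i)}-a_{\sigma(i+n)}|^2$ counts the mixed pairs of the partition $\{\sigma(i),\sigma(i+n)\}_{i\le n}$ of $\{1,\dots,2n\}$, and since these pairs are disjoint and each mixed pair consumes a distinct $1$, this sum is at most $\ell$ rather than the generic $2\|a\|_2^2=2\ell$. Your supporting observations are sound: the symmetrization/telescoping chain of Section~\ref{smain} nowhere uses the centering $\sum_i a_i=0$ (that reduction only serves to upgrade $\|a\|_2^2$ to $\|a\|_2^2-Nb^2$ in the final bound), and the dictionary $f_a=2|\xi-\EE\,\xi|$ with $\xi$ hypergeometric of parameters $(2n,n,\ell)$ is exact. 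What your route buys is a strictly better constant: $\EE\,|\xi-\EE\,\xi|^p\le(p\ell/4)^{p/2}$, i.e.\ constant $1$ in place of $\sqrt 2$. This is worth keeping, because neither of the paper's arguments actually attains the stated constant: the black-box route, once the factor $2^p$ from $X=2(\xi-\EE\,\xi)$ is accounted for, gives only $(p\ell/2)^{p/2}=2^{p/2}(p\ell/4)^{p/2}$, and the direct proof gives $2(p\ell/4)^{p/2}$; your argument is the only one of the three that proves (indeed improves) the inequality as stated.
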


\bigskip

\begin{remark}
It is well known (see e.g. Theorem~1.1.5 in \cite{Chafai}) that the conclusion of Proposition~\ref{hypergeometric} is equivalent to the following, so-called $\psi_2$ deviation inequality.
\begin{align}\label{11}
\forall t \geq 1 \quad \quad \PP(|\xi-\EE\, \xi|> t)\leq \exp\left(\frac{-c t^2}{\ell}\right),
\end{align}
in the following sense: Proposition 3.1 implies (\ref{11}) for some absolute constant $c>0$ and vise versa, there is another absolute constant $C>0$, such that \ref{11} implies Proposition 3.1.
This estimate, for hypergeometric $\xi$, is of independent interest, in particular it
is better than the previously observed bound $\exp(-2t^2/n)$ when $\ell\ll n$ (see Section 6.5 of [8] and formulas (10), (14) in    \cite{Skala}).
\end{remark}

\bigskip
\begin{remark}
One can use Theorem \ref{proppsi2} to estimate
$\EE_S|\sum_{i=1}^{2n}a_i \varepsilon_i|^p$ in the case that the vector  $a$
has $0/1$ coordinates with $\ell$ ones. Indeed, without loss of generality  assume that $a_1=a_2=\ldots =a_{\ell}=1$
and $a_{\ell+1}=a_{\ell+2}=\ldots =a_{2n}=0$.  Then,
$\sum_{i=1}^{2n}a_i \varepsilon_i=\sum_{i=1}^{\ell}\varepsilon_i$.
Theorem~\ref{proppsi2} implies the following estimate.
\end{remark}
\begin{corollary}\label{corhyp}
Let $a \in \R^{N}$, $N=2n$, be a vector with $\ell$ coordinates equals to one and $N-\ell$ zero coordinates.
Then, for $p\geq 2$,
\begin{align*}
 \EE_S\left|\sum_{i=1}^{N}a_i \varepsilon_i\right|^p\leq (2 \, p\, \ell)^{p/2}.
\end{align*}
\end{corollary}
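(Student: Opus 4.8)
The plan is to derive Corollary~\ref{corhyp} as an immediate consequence of Theorem~\ref{mainthm} (equivalently Theorem~\ref{proppsi2}), with the only real work being an elementary evaluation of the two defining quantities $\|a\|_2^2$ and $b$ for the $0/1$ vector. As noted in the preceding remark, we may assume without loss of generality that $a_1 = \cdots = a_\ell = 1$ and $a_{\ell+1} = \cdots = a_{2n} = 0$, so that $\sum_{i=1}^N a_i \varepsilon_i = \sum_{i=1}^\ell \varepsilon_i$. First I would record that for such a vector $\|a\|_2^2 = \ell$ and $\sum_{i=1}^N a_i = \ell$, whence $b = \tfrac{1}{N}\sum_{i=1}^N a_i = \tfrac{\ell}{2n}$ and $N b^2 = \tfrac{\ell^2}{2n}$.

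The key step is then to bound the quantity $\|a\|_2^2 - N b^2$ appearing on the right-hand side of (\ref{main}). With the values above,
\[
  \|a\|_2^2 - N b^2 = \ell - \frac{\ell^2}{2n} = \ell\left(1 - \frac{\ell}{2n}\right).
\]
Here the hypothesis $1 \le \ell \le n$ enters: it gives $\tfrac{\ell}{2n} \le \tfrac12$, so the bracketed factor lies in $[\tfrac12, 1]$ and in particular $\|a\|_2^2 - N b^2 \le \ell$. Substituting this into Theorem~\ref{mainthm} yields
\[
  \left(\EE_S\left|\sum_{i=1}^{N} a_i \varepsilon_i\right|^p\right)^{1/p}
  \le \sqrt{2p}\,\left(\|a\|_2^2 - N b^2\right)^{1/2}
  \le \sqrt{2p}\,\sqrt{\ell} = \sqrt{2p\ell},
\]
and raising both sides to the $p$-th power gives exactly the claimed estimate $\EE_S\left|\sum_{i=1}^{N} a_i \varepsilon_i\right|^p \le (2p\ell)^{p/2}$.

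I do not anticipate a genuine obstacle here: the statement is a specialization of the main theorem, and the entire content is the substitution of $\|a\|_2^2 = \ell$ and $b = \ell/(2n)$ together with the observation that the subtracted term $N b^2$ only helps, so it can be discarded using $\ell \le n$. The one point worth stating explicitly, rather than glossing over, is precisely that the restriction $\ell \le n$ (part of the $(2n,n,\ell)$ setup) is what guarantees $1 - \tfrac{\ell}{2n} \le 1$; without it one would still have a clean bound, but the factor could not be absorbed into the constant as cleanly. Everything else is routine.
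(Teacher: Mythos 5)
Your proof is correct and takes exactly the paper's route: the corollary is stated there as an immediate consequence of Theorem~\ref{proppsi2}, obtained precisely by substituting $\|a\|_2^2=\ell$ for the $0/1$ vector and discarding the subtracted term $N b^2$. One small correction to your commentary: the hypothesis $\ell\le n$ is not what guarantees $1-\tfrac{\ell}{2n}\le 1$ --- that holds simply because $\ell\ge 0$ (equivalently, because $Nb^2\ge 0$), so the estimate requires no restriction on $\ell$ at all, consistent with the corollary's statement which imposes none.
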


\bigskip

\noindent
{\bf Proof of Proposition~\ref{hypergeometric}. }
Denote
$X:=\sum_{i=1}^{2n}a_i \varepsilon_i=\sum_{i=1}^{\ell}a_i \varepsilon_i$.  Since the vector  $a$
has $0/1$ coordinates with $\ell$ ones, $\|a\|_2=\sqrt{\ell}$.
For every $0\leq k\leq \ell$ we compute
the probability $q_k$ that exactly $k$ of $\varepsilon_1, \varepsilon_2, \ldots, \varepsilon_{\ell}$ equals to one
(in that case $X=2k-\ell$). Since $S=\sum_{i=1}^{2n}\varepsilon_i=0$, in order to get $k$ ones, we have to choose
$k$ ones out of $\varepsilon_1, \varepsilon_2, \ldots, \varepsilon_{\ell}$  and $n-k$ ones out of
$\varepsilon_{\ell+1}, \varepsilon_{\ell+2}, \ldots, \varepsilon_{2n}$. This gives us
$\displaystyle{{\ell \choose k}{2n-\ell \choose n-k}}$ choices. Since
$\displaystyle{|\Omega|=\left|\left\{\varepsilon \in \{-1,1\}^{2n}\, | \,
\sum_{i=1}^{2n}\varepsilon_i=0\right\}\right|={2n \choose n}}$,
we obtain that $q_k=p_k$, i.e. $X=2(\xi-\EE\, \xi)$, where $\xi$ has hypergeometric distribution with parameters
$(2n, n, \ell)$. Therefore, Corollary 3.3 implies
\begin{align*}
\left(\EE|\xi-\EE\xi|^p\right)^{1/p}\leq \sqrt{2 \, p \, \ell}.
\end{align*}
\qed

\bigskip
We would also like to note that Proposition \ref{hypergeometric} can be proved directly.
Below we provide such a  direct proof, which gives $2$ in place of $\sqrt 2$ in front of $\displaystyle{\left(\frac{p\ell}{4}\right)^{p/2}}$. This proof is of interest as it can be extended to slightly more general case (see Remark \ref{rem4}) and can be used in another approach to the main problem (see Remark \ref{rem5}).

\bigskip

\noindent\textbf{Direct proof of the Proposition \ref{hypergeometric}. }
%
%
From Stirling's formula together with the observation
that $\sqrt{\pi n}\, {2n \choose n}/4^n$ increases, we observe that
\begin{align*}
\frac{2^{2n}}{\sqrt{2 \pi n}}\leq {2n \choose n }\leq \frac{2^{2n}}{\sqrt{ \pi n}}.
\end{align*}
Using this, we obtain
\begin{align}\label{bincoef}
\frac{{2n - \ell \choose n-k}}{{2n \choose n}}\leq \frac{{2n-\ell \choose n-\lfloor\frac{\ell}{2}\rfloor}}{{2n \choose n}}\leq \frac{2^{2n-\ell}}{\sqrt{\pi(n-\lfloor\frac{\ell}{2}\rfloor)}}\frac{\sqrt{2 \pi n}}{2^{2n}}\leq\frac{2}{2^{\ell}}\leq 1.
\end{align}
Therefore
\begin{align*}
\EE \, |\xi-\EE\, \xi|^p=\frac{1}{2^p}\sum_{k=0}^{\ell}|2k-\ell|^p\frac{{\ell \choose k}{2n-\ell \choose n-k}}{{2n \choose n}}\leq \frac{2 }{2^{\ell+p}}\sum_{k=0}^{\ell}|2k-\ell|^p {\ell \choose k}=\frac{2}{2^p} \EE|2S_{\ell}|^p,
\end{align*}
where $S_{\ell}$ is a sum of $\ell$ i.i.d. Rademacher random variables. By Khinchine inequality  (\ref{1}), we have
\[
\left(\EE|S_{\ell}|^p\right)^{1/p}\leq \sqrt p \, \sqrt{\ell}.
\]
Thus,
\[
\EE \, |\xi-\EE\, \xi|^p
\leq 2  \left(\frac{ p \, {\ell}}{4}\right)^{p/2}.
\]
$\hspace{16cm}\Box$

\bigskip

\begin{remark}\label{rem4}
The above proof can be extended to slightly larger class of hypergeometric random variables. Note that the proof works whenever $\displaystyle{{{N-\ell \choose n-k}}\bigg/{{N \choose n}}\leq 1}$. Thus, if $\displaystyle{\ell \geq N- \log_2\left[\sqrt{\pi} {N \choose n}\right] }$, then
$$
\displaystyle{\EE \, |\xi-\EE\, \xi|^p\leq 2 \left({p \, \ell}/{4}\right)^{\frac p2}}
$$
for a $(N, n, \ell)$ hypergeometric random variable $\xi$.
\end{remark}

\section{Concluding Remarks}

In this section we discuss Problem~\ref{probl}. A possible approach to this problem is to use the
concentration on the group $\Pi_N$
(endowed with the distance $\displaystyle{d_N{(\sigma, \pi)}=|\{i\, :\, \sigma(i)\neq \pi(i)\}|}$).
The following Theorem was proved by Maurey  (\cite{Maurey}, see also \cite{Schechtman}).

\begin{theorem}\label{THSCH}
\textit{Let $f:\Pi_{N}\longrightarrow\R$ be $1$-Lipschitz function. Then for all $t>0$
\begin{align}\label{7}
\mu\left(\{\sigma:|f(\sigma)-\EE f|\geq t\} \right)\leq 2e^{-t^2/(32N)}.
\end{align}}
\end{theorem}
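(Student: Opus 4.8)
The plan is to prove the inequality by the martingale (Azuma--Hoeffding) method, realizing $f-\EE f$ as a sum of bounded martingale differences adapted to the filtration that reveals the values $\sigma(1),\sigma(2),\ldots,\sigma(N)$ one coordinate at a time. Concretely, equip $\Pi_N$ with the normalized counting measure $\mu$, let $\mathcal{F}_k$ be the $\sigma$-algebra generated by the first $k$ values $\sigma(1),\ldots,\sigma(k)$, and set $M_k:=\EE[f\mid\mathcal{F}_k]$, so that $M_0=\EE f$ and $M_N=f$. Writing $d_k:=M_k-M_{k-1}$, the telescoping identity $f-\EE f=\sum_{k=1}^N d_k$ reduces everything to a uniform bound on the martingale increments $d_k$.

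The heart of the argument, and the step I expect to be the main obstacle, is the bounded-differences estimate $|d_k|\le 2$. Conditioned on the event $\{\sigma(1)=s_1,\ldots,\sigma(k-1)=s_{k-1}\}$ generating $\mathcal{F}_{k-1}$, the quantity $M_{k-1}$ is the average of the conditional expectations $\EE[f\mid\mathcal{F}_{k-1},\sigma(k)=s]$ over the admissible values $s$, so $|d_k|$ is controlled by
\[
 \max_{s,s'}\left|\EE[f\mid\mathcal{F}_{k-1},\sigma(k)=s]-\EE[f\mid\mathcal{F}_{k-1},\sigma(k)=s']\right|.
\]
To bound this difference I would build an explicit measure-preserving coupling between the two conditional laws: given a permutation $\sigma$ with $\sigma(k)=s$, locate the unique position $j>k$ with $\sigma(j)=s'$ and send $\sigma$ to $\sigma\circ(k\,j)$, the permutation obtained by transposing the values at positions $k$ and $j$. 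This map is a bijection between the two conditioning events (each of cardinality $(N-k)!$, so uniform measure is preserved), and $\sigma$ and its image disagree only at the two positions $k$ and $j$, i.e.\ $d_N(\sigma,\sigma\circ(k\,j))=2$. Since $f$ is $1$-Lipschitz for $d_N$, each paired difference is at most $2$ in absolute value, and averaging over the coupling yields $|d_k|\le 2$.

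With $|d_k|\le 2$ for every $k$, the Azuma--Hoeffding inequality applied to $(M_k)$ gives
\[
 \mu\left(\{|f-\EE f|\ge t\}\right)\le 2\exp\left(-\frac{t^2}{2\sum_{k=1}^N 2^2}\right)=2\exp\left(-\frac{t^2}{8N}\right),
\]
which is stronger than, and in particular implies, the claimed bound $2e^{-t^2/(32N)}$. The only remaining points needing care are verifying that the coupling genuinely matches the uniform conditional measures and checking the trivial edge cases $s=s'$ (where the contribution to $d_k$ vanishes) and $k=N$. An alternative route closer to Maurey's original argument is to bound the Laplace transform $\EE\,e^{\lambda(f-\EE f)}$ directly through an infimum-convolution/tensorization inequality on $\Pi_N$; but the martingale argument is the most transparent and already delivers the stated constant.
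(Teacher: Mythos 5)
The paper offers no proof of this theorem at all --- it is quoted as Maurey's result, citing \cite{Maurey} (see also \cite{Schechtman}), and the argument in those references is precisely the one you give: the Doob martingale along the filtration revealing $\sigma(1),\ldots,\sigma(k)$, a transposition coupling between the conditional laws showing $|d_k|\le 2$, and Azuma--Hoeffding. Your proof is correct, and your bookkeeping is in fact sharper than the stated bound: it yields $2e^{-t^2/(8N)}$, which implies the claimed $2e^{-t^2/(32N)}$.
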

Let us mention here, the following open question, posed by G.~Schechtman in \cite{Schechtman}: ``\textit{Is there an equivalent (with constants independent of $N$) metric on $\Pi_{N}$ for which the isoperimetric problem can be solved?''}

\bigskip
Theorem \ref{THSCH} implies the following estimate.
\begin{corollary}\label{cor5.2}
Let $a,b \in \R^N$. Let $f: \Pi_N\longrightarrow \R$ be defined by
\begin{align}
f(\sigma):=\left|\sum_{i=1}^Na_{\sigma(i)} b_i\right|.
\end{align}
Then
\begin{align}\label{10}
\left(\EE|f|^p\right)^{1/p}\leq \EE |f|+ 4\sqrt{p} \, \sqrt{N}\|a\|_{\infty}\|b\|_{\infty}.
\end{align}
\end{corollary}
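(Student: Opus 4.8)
The plan is to regard $f$ as a Lipschitz function on the metric space $(\Pi_N,d_N)$, apply Maurey's concentration inequality (Theorem~\ref{THSCH}) to obtain a sub-Gaussian tail bound, and then integrate that tail to control the $L^p$ norm. First I would compute the Lipschitz constant. Writing $g(\sigma)=\sum_{i=1}^N a_{\sigma(i)}b_i$ so that $f=|g|$, note that for two permutations $\sigma,\pi$ only the indices with $\sigma(i)\neq\pi(i)$ contribute, whence
\begin{align*}
|g(\sigma)-g(\pi)|\le\sum_{i:\,\sigma(i)\neq\pi(i)}|a_{\sigma(i)}-a_{\pi(i)}|\,|b_i|\le 2\|a\|_{\infty}\|b\|_{\infty}\,d_N(\sigma,\pi).
\end{align*}
By the reverse triangle inequality $f=|g|$ obeys the same estimate, so $f$ is $L$-Lipschitz with $L:=2\|a\|_{\infty}\|b\|_{\infty}$.

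Applying Theorem~\ref{THSCH} to the $1$-Lipschitz function $f/L$ and rescaling the threshold $t\mapsto t/L$ yields
\begin{align*}
\mu\big(\{\sigma:\,|f(\sigma)-\EE f|\ge t\}\big)\le 2\exp\!\Big(\tfrac{-t^2}{32NL^2}\Big),\qquad t>0.
\end{align*}
To reach \eqref{10} I would first separate the mean: since $f\ge 0$ we have $\EE f=\EE|f|$, and Minkowski's inequality in $L^p$ gives $(\EE|f|^p)^{1/p}\le \EE|f|+(\EE|f-\EE f|^p)^{1/p}$. It then remains to bound the centred moment, which I would do by integrating the tail,
\begin{align*}
\EE|f-\EE f|^p=\int_0^\infty p\,t^{p-1}\,\mu\big(\{|f-\EE f|\ge t\}\big)\,dt\le p\,\Gamma(p/2)\,(32NL^2)^{p/2},
\end{align*}
where the last step is the substitution $s=t^2/(32NL^2)$. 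Taking $p$-th roots,
\begin{align*}
(\EE|f-\EE f|^p)^{1/p}\le (p\,\Gamma(p/2))^{1/p}\,\sqrt{32N}\,L.
\end{align*}

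The conceptual work ends here; combining the last two displays already gives a bound of the form $\EE|f|+C\sqrt p\,\sqrt N\,\|a\|_\infty\|b\|_\infty$. The one place I expect genuine difficulty is pinning the constant $C$ down to $4$. This amounts to bounding $(p\Gamma(p/2))^{1/p}$ by a constant times $\sqrt p$: via $p\Gamma(p/2)=2\Gamma(p/2+1)$ and Stirling one has $(p\Gamma(p/2))^{1/p}\sim\sqrt{p/(2e)}$, whereas the crude bound $\Gamma(p/2)\le(p/2)^{p/2}$ only gives $(p\Gamma(p/2))^{1/p}\le p^{1/p}\sqrt{p/2}$. Multiplying by $\sqrt{32N}$ and the Lipschitz factor $L=2\|a\|_\infty\|b\|_\infty$, the crude estimate overshoots $4$, so obtaining exactly the stated constant requires the sharper asymptotic for $\Gamma$ over the relevant range of $p$ (and possibly a sharper input, e.g. a better concentration constant than $32$, or the observation that the differences $a_{\sigma(i)}-a_{\pi(i)}$ cannot all reach $2\|a\|_\infty$ simultaneously since $\sigma,\pi$ are permutations). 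Modulo this bookkeeping the two displays combine to give \eqref{10}.
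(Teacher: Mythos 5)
Your proposal follows exactly the same route as the paper's own proof: the Lipschitz bound $2\|a\|_{\infty}\|b\|_{\infty}$ with respect to $d_N$, Maurey's concentration inequality (Theorem~\ref{THSCH}), integration of the sub-Gaussian tail against $p\,t^{p-1}$, and the triangle inequality in $L^p$ to reattach the mean $\EE f=\EE|f|$. Your execution of each step is correct, and in fact more careful than the paper's.

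The difficulty you flag at the end --- pinning the constant down to $4$ --- is not a defect of your argument but of the paper's. In the paper's proof the tail bound is written with exponent $-t^2/(32N\|a\|_{\infty}^2\|b\|_{\infty}^2)$, i.e.\ the square of the Lipschitz constant $2\|a\|_{\infty}\|b\|_{\infty}$ has silently lost its factor $4$; and the next displayed inequality asserts in effect
\[
2p\int_0^{\infty}e^{-t^2/(32N\|a\|_{\infty}^2\|b\|_{\infty}^2)}t^{p-1}\,dt
= p\,(32N)^{p/2}\,\Gamma\!\left(\tfrac p2\right)\|a\|_{\infty}^p\|b\|_{\infty}^p
\le 4^p\,\Gamma\!\left(\tfrac p2\right)N^{p/2}\|a\|_{\infty}^p\|b\|_{\infty}^p,
\]
which would require $p\,(4\sqrt2)^p\le 4^p$, false for every $p\ge 2$. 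Your computation shows what the method actually yields:
\[
\left(\EE|f-\EE f|^p\right)^{1/p}\le \left(p\,\Gamma\!\left(\tfrac p2\right)\right)^{1/p}\sqrt{32N}\cdot 2\|a\|_{\infty}\|b\|_{\infty}
\le 8\sqrt{2}\,\sqrt{pN}\,\|a\|_{\infty}\|b\|_{\infty}\qquad (p\ge 2),
\]
using $\bigl(p\,\Gamma(p/2)\bigr)^{1/p}\le\sqrt p$; even the sharp Stirling asymptotics only replace $8\sqrt2$ by roughly $8/\sqrt e>4$ for large $p$. So the constant $4$ in (\ref{10}) is not attainable by this argument, nor by the paper's, as long as one uses the constant $32$ from Theorem~\ref{THSCH} and the Lipschitz constant $2\|a\|_{\infty}\|b\|_{\infty}$. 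You should regard your proof as complete for the statement with $4$ replaced by a larger absolute constant (e.g.\ $8\sqrt2$); no amount of gamma-function bookkeeping will recover $4$, and the speculative improvements you mention (a better concentration constant, or a refined Lipschitz analysis) are indeed the only places where the constant could be reduced.
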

\begin{proof}
It is easy to see that  $f$ is a Lipschitz function with Lipschitz constant $2\|a\|_{\infty}\|b\|_{\infty}$, indeed,
\begin{align*}
|f(\sigma)-f(\pi)|&\leq \left|\sum_{i=1}^Na_{\sigma(i)}b_i-\sum_{i=1}^{N}a_{\pi(i)}b_i\right|\\
&\leq \sum_{i=1}^{N}|b_i||a_{\sigma(i)}-a_{\pi(i)}|\leq 2\|a\|_{\infty}\|b\|_{\infty} d_{N}(\sigma, \pi).
\end{align*}
Using Theorem \ref{THSCH} and the bound $\Gamma(x)\leq x^{x-1}$ for all $ x\geq 1$ (see for example \cite{Anderson}), we obtain
\begin{align*}
\EE|f-\EE f|^p=\int_0^{\infty}\mu_{N}(|f-\EE f|^p\geq t^p)dt^p
&\leq 2p\int_0^{\infty}e^{-t^2/(32N\|a\|_{\infty}^2\|b\|_{\infty}^2)}t^{p-1}dt\\
&\leq 4^p\, \Gamma\left(\frac p2\right)N^{p/2}\|a\|_{\infty}^p\|b\|_{\infty}^p\\
&\leq 4^p\, N^{p/2}p^{p/2}\|a\|_{\infty}^p\|b\|_{\infty}^p.
\end{align*}
Thus,
\begin{align*}
\left(\EE|f|^p\right)^{1/p}\leq \EE |f|+ 4\sqrt{ p} \sqrt{N}\|a\|_{\infty}\|b\|_{\infty} \leq
\sqrt{\EE |f|^2}+ 4\sqrt{ p} \sqrt{ N}\|a\|_{\infty}\|b\|_{\infty}.
\end{align*}
\end{proof}

\bigskip

\begin{remark}\label{rem5}
In the case when $b_i=\pm 1$ with condition $\displaystyle{\sum_{i=1}^N b_i=0}$, Corollary \ref{cor5.2} gives an additional factor $\sqrt{N }$ in the upper estimate in (\ref{main}). Using the chaining argument similar to the one used in  \cite{ALPT, ALPT2, ALLPT} and Proposition \ref{hypergeometric}, the factor $\sqrt N$ can be reduced to $\sqrt{\ln N }$ (the details will be provided in \cite{S}).
\end{remark}

\begin{remark}
It would be nice to obtain the upper bound in Corollary \ref{cor5.2} with constant independent of $N$.
\end{remark}

\bigskip

\noindent
\textbf{Acknowledgement:} I would like to express my deep gratitude to my supervisor A.E.~Litvak for encouragement and helpful discussions. I am also thankful to M.~Rudelson for his valuable suggestions and very useful discussions.


\end{document}